\numberwithin{equation}{section}
\newtheorem{theorem}{Theorem}[section]
\newtheorem{corollary}[theorem]{Corollary}
\newtheorem{lemma}[theorem]{Lemma}
\newtheorem{proposition}[theorem]{Proposition}
\theoremstyle{definition}
\newtheorem{remark}[theorem]{Remark}
\newtheorem{notation}[theorem]{Notation}
\newtheorem*{acknowledgements}{Acknowledgements}
\newcommand{\thmref}[1]{Theorem~\ref{#1}}
\newcommand{\lemref}[1]{Lemma~\ref{#1}}
\newcommand{\propref}[1]{Proposition~\ref{#1}}
\newcommand{\corref}[1]{Corollary~\ref{#1}}
\DeclareFontFamily{OT1}{rsfs}{}
\DeclareFontShape{OT1}{rsfs}{n}{it}{<-> rsfs10}{}
\DeclareMathAlphabet{\curly}{OT1}{rsfs}{n}{it}
\newcommand\LL{\mathbf L}
\renewcommand\O{\mathcal O}
\newcommand\cE{\mathcal E}
\newcommand\F{\mathcal F}
\newcommand\Coeff{\mathrm{Coeff}}
\newcommand\td{\mathrm{td}}
\newcommand\C{\mathbb C}
\newcommand\Q{\mathbb Q}
\newcommand\R{\mathbb R}
\newcommand\Z{\mathbb Z}
\newcommand\INTO{\ar@{^{(}->}[r]}
\newcommand\rk{\operatorname{rk}}
\newcommand\ch{\operatorname{ch}}
\newcommand\vd{\operatorname{vd}}
\newcommand\Pic{\operatorname{Pic}}
\newcommand\Sym{\operatorname{Sym}}
\newcommand\beq[1]{\begin{equation}\label{#1}}
\newcommand\eeq{\end{equation}}
\newcommand\beqa{\begin{eqnarray*}}
\newcommand\eeqa{\end{eqnarray*}}
\newcommand\<{\langle}
\renewcommand\>{\rangle}
\DeclareMathOperator{\Ell}{Ell}
\DeclareMathOperator{\ELL}{ELL}
\DeclareMathOperator{\num}{num}
\DeclareMathOperator{\orb}{orb}
\title{Refined Verlinde formulas for Hilbert schemes of points and moduli spaces of sheaves on K3 surfaces}
\author{Lothar~G\"ottsche}
\address{International Centre for Theoretical Physics, Strada Costiera 11, 34100 Trieste, Italy}
\email{gottsche@ictp.it}
\begin{document}



\maketitle

\begin{prelims}

\DisplayAbstractInEnglish

\bigskip

\DisplayKeyWords

\medskip

\DisplayMSCclass

\bigskip

\languagesection{Fran\c{c}ais}

\bigskip

\DisplayTitleInFrench

\medskip

\DisplayAbstractInFrench

\end{prelims}


\newpage

\setcounter{tocdepth}{2}

\tableofcontents


\section{Introduction} 
The celebrated Verlinde formula (see \cite{Ver,NR,BL,Fal}) is a formula for the generating function for   dimensions of spaces of sections of line bundles on moduli spaces of vector bundles on algebraic curves.

Now let $S$ be a smooth projective surface over $\C$ and $S^{[n]}$ the Hilbert scheme of $n$ points on $S$. 
For every vector bundle $V$ on $S$ there is a corresponding tautological bundle $V^{[n]}$ of rank $\rk(V^{[n]})=n\rk(V)$, whose fibre
over $Z\in S^{[n]}$ is $H^0(Z,V|_Z)$. The map $V\mapsto V^{[n]}$ extends to a homomorphism from the Grothendieck group $K^0(S)$ of vector bundles on $S$ to $K^0(S^{[n]})$.
For $L\in S^{[n]}$ denote $\mu(L):=\det((L-\O_S)^{[n]})$ and $E:=\det(\O^{[n]}_S)$.
Then it is well known that $\Pic(S^{[n]})=\mu(\Pic(S))\oplus \Z E$.
The analogue of the Verlinde formula for Hilbert schemes of points is a formula for the generating function for  holomorphic Euler characteristics $\chi(S^{[n]},\mu(L)\otimes E^r)$.
In \cite{EGL} such a formula was proven in the cases $r=-1,0,1$ or 
$K_S^2=K_SL=0$.
On the other hand the celebrated Dijkgraaf-Moore-Verlinde-Verlinde formula \cite{DMVV}, shown in \cite{BL1,BL2,BL3}, relates the generating function of the elliptic genera $\Ell(S^{[n]})$ of Hilbert schemes of points to Siegel modular forms.

In this short note we interpolate between these two results,  by proving a formula for $\Ell(S^{[n]},\mu(L)\otimes E^r)$, the elliptic genus with values in the line bundle $\mu(L)\otimes E^r$.
To state these results we introduce the following power series.
\begin{align*}
\phi_{-2,1}(q,y)&:=(y^{1/2}-y^{-1/2})^2\prod_{n>0} \frac{(1-q^ny )^2(1-q^n/y)^2}{(1-q^n)^4},\\
\wp(q,y)&:=\frac{1}{12}+\frac{1}{(y^{1/2}-y^{-1/2})^2}+ \sum_{n>0} \sum_{d|n} d (y^d-2+y^{-d}) q^n,\\
\phi_{0,1}(q,y)&:=12\wp(x,y)\phi_{-2,1}(x,y),\\
h(q,y)&:=-\frac{1}{12}+\sum_{n>0}\sum_{d|n} \frac{n}{d} (y^d+y^{-d}) x^n.
\end{align*}
$\wp(q,y)$ is the Weierstrass $\wp$-function.
It is standard that
$$\Ell(S)=-(\wp+h)\phi_{-2,1}K_S^2+\phi_{0,1}\chi(\O_S).$$
We also introduce the following Borcherds type lifts. For $f:=\sum_{m,n}c_{m,n}q^my^n\in \Q[[y,q]]$, we put
\begin{align*}
\LL(f,p)&:=\prod_{l>0,m\geq 0,n \in \Z} (1-p^{l} q^m y^n)^{c_{lm,n}},\\ 
\LL^{(a,b)}(f,p)&:=\prod_{l>0,m\geq 0,n \in \Z} (1-p^{l} q^m y^n)^{l^a n^b c_{lm,n}},\quad a,b\in \Z.
\end{align*}
Then the DMVV formula says 
$$\sum_{n\ge 0} \Ell(S^{[n]}) p^n=\frac{1}{\LL(\Ell(S),p)}=\frac{\LL((\wp+h)\phi_{-2,1},p)^{K_S^2}}{\LL(\phi_{0,1},p)^{\chi(\O_S)}}.$$


The first theorem deals with the case $r=0$.
\begin{theorem} \label{ellr0}
Let $S$ be an algebraic surface, $L\in \Pic(S)$.
Then 
\begin{align*}\sum_{n\ge 0} \Ell(S^{[n]},\mu(L)) p^n=&\frac{\big(\LL^{(2,0)}(-\phi_{-2,1},p)\big)^{\frac{L^2}{2}}\big(\LL^{(1,1)}(-\phi_{-2,1},p)\big)^{\frac{LK_S}{2}}}{
\LL(\Ell(S),p)}.
\end{align*}
\end{theorem}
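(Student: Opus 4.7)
My plan is to combine the Ellingsrud--G\"ottsche--Lehn (EGL) universality theorem with the DMVV formula and calculations on test surfaces. The elliptic genus $\Ell(S^{[n]},\mu(L))$ is an integral over $S^{[n]}$ of a universal polynomial (with coefficients in $\Q((y^{1/2}))[[q]]$) in $\ch(\mu(L))$ and in the Chern classes of $T_{S^{[n]}}$, both of which are tautological classes on $S^{[n]}$ ($\mu(L)$ is built from $L^{[n]}$, and $T_{S^{[n]}}$ from the universal subscheme sequence). EGL universality then implies that this integral is a universal polynomial in the four Chern numbers $L^2$, $LK_S$, $K_S^2$, $\chi(\O_S)$, and the exponential structure of the generating series forces
$$\sum_{n\ge 0}\Ell(S^{[n]},\mu(L))\,p^n=A^{L^2/2}\,B^{LK_S/2}\,C^{K_S^2}\,D^{\chi(\O_S)}$$
for universal power series $A,B,C,D\in 1+p\,\Q((y^{1/2}))[[p,q]]$.

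Specializing to $L=0$ reduces this to DMVV and identifies $C^{K_S^2}D^{\chi(\O_S)}=\LL(\Ell(S),p)^{-1}$, so only $A$ and $B$ remain to be found. To pin down $A$, I would work on a surface with $K_S=0$ (K3 or abelian), where only the factor $A^{L^2/2}$ depends on $L$; the cleanest input is the Borisov--Libgober equivalence between $\Ell(S^{[n]},\mu(L))$ and the orbifold elliptic genus of $[\Sym^n S]$ twisted by the orbifold line bundle attached to $L^{\boxtimes n}$, combined with the product formula for orbifold elliptic genera of symmetric products. Matching the $L^2$-dependent factor against a Borcherds lift identifies $A=\LL^{(2,0)}(-\phi_{-2,1},p)$, with the weight $l^2$ arising from cycles of length $l$ in $S_n$ paired with the two $L$-insertions that encode $L^2$. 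To pin down $B$, I would specialize to a surface with $LK_S\ne 0$---for instance $\PP^2$ with $L=\O(1)$, where $(L^2,LK_S,K_S^2,\chi(\O_S))=(1,-3,9,1)$---and use torus localization on $S^{[n]}$ over monomial-ideal fixed points with the standard character formulas for $L^{[n]}$ and $T_{S^{[n]}}$, extracting $B=\LL^{(1,1)}(-\phi_{-2,1},p)$ by dividing out the now-known factors.

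The main obstacle is the symmetric-product computation on K3 or abelian surfaces: recasting the double-product Borisov--Libgober formula (with line-bundle coefficients) in the specific Borcherds form $\LL^{(2,0)}(-\phi_{-2,1},p)^{L^2/2}$ is delicate, since one must track precisely how the insertion $L^{\boxtimes n}$ couples to each twisted sector of $[\Sym^n S]$ to generate the weight $l^2$ on cycles of length $l$. Once that identification is in place, the $LK_S$ determination reduces to a finite bookkeeping check on a single toric surface.
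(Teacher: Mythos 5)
Your first two steps are sound and are consistent with what the paper does elsewhere: the EGL universality theorem does give $\sum_n\Ell(S^{[n]},\mu(L))p^n=A^{L^2/2}B^{LK_S/2}C^{K_S^2}D^{\chi(\O_S)}$ (this is exactly the paper's Corollary~\ref{univers} with $r=0$), and $L=\O_S$ plus DMVV identifies the $L$-independent part as $\LL(\Ell(S),p)^{-1}$. The computation you defer as the ``main obstacle'' --- evaluating the Borisov--Libgober orbifold elliptic genus of $S^n/\mathfrak S_n$ with the insertion $\ch(L_n)$ and recasting it as a Borcherds-type lift --- is precisely the content of the paper's proof. But note that the paper carries it out for \emph{arbitrary} $S$, not just $K_S=0$: the key identity is $\int_S F_{a,b,s}\,\ch(j_l^*\eta^*(L_n))=\Ell(S,L^b,az,\frac{a\tau-s}{b})$, and since $\Ell(S,L^b,\cdot)=\Ell(S)+\frac{K_SL}{2}b\,y\partial_y\phi_{-2,1}+\frac{L^2}{2}b^2\phi_{-2,1}$, the $LK_S$ term rides along for free; the weight $b$ (from $\ch_1$ pairing with the degree-one part of the elliptic class) together with the $l$ from $y\partial_y$ is exactly what produces $\LL^{(1,1)}$, while $b^2$ from $\ch_2$ produces $\LL^{(2,0)}$. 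So restricting that computation to K3 surfaces buys you essentially nothing and forces you to find $B$ by other means.

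That other means is where the genuine gap lies. Torus localization on $(\PP^2)^{[n]}$ computes, for each fixed $n$, one coefficient of the known combination $A^{1/2}B^{-3/2}C^{9}D$; it therefore determines any \emph{finite} number of coefficients of $B$, but $B$ is a power series in $p$, $q$, $y$ with infinitely many unknown coefficients, and localization provides no closed-form evaluation of the sum over monomial ideals (the DMVV formula itself is not proved that way). Your claim that the $LK_S$ determination ``reduces to a finite bookkeeping check on a single toric surface'' is therefore false: as described, this step is a verification scheme, not a proof, and the identification $B=\LL^{(1,1)}(-\phi_{-2,1},p)$ is never established. The fix is simply to run your own symmetric-product computation on a surface with $K_S\neq 0$ (or, as the paper does, on general $S$ from the start), at which point the $\PP^2$ detour is unnecessary.
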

Specializing  to $L=\O_S$, we recover the DMVV formula. Specializing to $q=0$ yields an infinite product formula for the $\chi_y$-genera with values in a $\mu(L)$, which  in turn recovers for $L=\O_S$
the formula for the $\chi_{-y}$-genera of Hilbert schemes from \cite{GS}.
We write 
$$\widetilde\Delta(p,y):=\prod_{n>0} (1-p^n)^{20}(1-p^ny)^2(1-p^n/y)^2, \quad \overline \eta(p)=\prod_{n>0}(1-p^n).$$

\begin{corollary}\label{ellch}
\begin{align*}
\sum_{n\ge 0} \overline \chi_{-y}(S^{[n]},\mu(L))p^n&=\prod_{n>0} \left(\frac{(1-p^n)^2}{(1-p^ny)(1-\frac{p^n}{y})}\right)^{\frac{n^2}{2}L^2}
\prod_{n>0}\left(\frac{1-\frac{p^n}{y}}{1-p^ny}\right)^{\frac{n}{2}LK_S}\frac{\overline \eta(p)^{K_S^2}}{\widetilde\Delta(p,y)^{\frac{\chi(\O_S)}{2}}}.
\end{align*}
\end{corollary}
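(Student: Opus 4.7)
The plan is to specialize Theorem~\ref{ellr0} at $q=0$, using that $\Ell(X,\mathcal M)|_{q=0}$ is (by definition of the elliptic genus) the normalized $\chi_{-y}$-genus $\overline\chi_{-y}(X,\mathcal M)$ appearing on the left-hand side. It then suffices to compute the $q=0$ reduction of each of the three $\LL$-factors on the right-hand side of the theorem and to check that they assemble into the stated product.

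The key numerical input is the reduction of the Jacobi forms at $q=0$. From the infinite product, $\phi_{-2,1}(0,y)=(y^{1/2}-y^{-1/2})^2$, while the constant terms $\tfrac{1}{12}$ and $-\tfrac{1}{12}$ in the series for $\wp$ and $h$ cancel, so
$$(\wp+h)(0,y)\,\phi_{-2,1}(0,y)=1,\qquad \phi_{0,1}(0,y)=\phi_{-2,1}(0,y)+12=y+10+y^{-1}.$$
This small cancellation is the only non-trivial point; it gives
$$\Ell(S)|_{q=0}=-K_S^2+(y+10+y^{-1})\chi(\O_S),\qquad -\phi_{-2,1}|_{q=0}=-y+2-y^{-1}.$$

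Next I specialize the Borcherds-type lifts. Since $q^m|_{q=0}$ vanishes for $m>0$, only the $m=0$ terms of the product survive, and for those $c_{lm,n}$ collapses to $c_{0,n}$ independently of $l$. Writing $f|_{q=0}=\sum_n c_{0,n}y^n$ one obtains
$$\LL(f,p)|_{q=0}=\prod_{l>0,\,n\in\Z}(1-p^ly^n)^{c_{0,n}},\qquad \LL^{(a,b)}(f,p)|_{q=0}=\prod_{l>0,\,n\in\Z}(1-p^ly^n)^{l^a n^b c_{0,n}}.$$
For $f=\Ell(S)|_{q=0}$ the only nonzero coefficients are $c_{0,0}=-K_S^2+10\chi(\O_S)$ and $c_{0,\pm1}=\chi(\O_S)$; regrouping the resulting product and comparing with the definition $\widetilde\Delta(p,y)=\prod_{n>0}(1-p^n)^{20}(1-p^ny)^2(1-p^n/y)^2$ gives
$$\LL(\Ell(S),p)|_{q=0}=\overline\eta(p)^{-K_S^2}\,\widetilde\Delta(p,y)^{\chi(\O_S)/2}.$$
For $f=-\phi_{-2,1}|_{q=0}$ the coefficients are $c_{0,0}=2$, $c_{0,\pm1}=-1$. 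Inserting $(a,b)=(2,0)$ (so $n^b=1$) gives the first product on the right-hand side of the corollary, with the $(1-p^l)$ factor appearing with exponent $2l^2$; inserting $(a,b)=(1,1)$ kills the $n=0$ contribution and combines the two $n=\pm1$ contributions, with the sign rearrangements producing $\prod_{l>0}((1-p^l/y)/(1-p^ly))^l$.

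Substituting these three specializations into Theorem~\ref{ellr0} and raising to the powers $L^2/2$ and $LK_S/2$ gives the stated formula. There is no real obstacle beyond the careful bookkeeping above; the single genuine identity used is the cancellation $(\wp+h)\phi_{-2,1}|_{q=0}=1$, which makes the $K_S^2$-part of $\Ell(S)|_{q=0}$ independent of $y$ and is what allows the $\overline\eta(p)^{K_S^2}$-factor to emerge cleanly from the denominator.
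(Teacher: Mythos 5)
Your proposal is correct and follows essentially the same route as the paper: specialize Theorem~\ref{ellr0} at $q=0$, use $\LL(f,p)|_{q=0}=\LL(f|_{q=0},p)$ (and its $(a,b)$-weighted variants), and evaluate the resulting lifts of $-y+2-y^{-1}$, $1$, and $y+10+y^{-1}$, which is exactly the paper's computation. The one identity you single out, $(\wp+h)\phi_{-2,1}|_{q=0}=1$, is indeed the only non-mechanical step, and your bookkeeping of the coefficients $c_{0,n}$ matches the paper's.
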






For general line bundles on Hilbert schemes of points  we can partially determine the generating function.

\begin{theorem}\label{ellrr}
For every $r\in \Z$ there are universal power series
$A_r, B_r\in \Q[y^{\pm 1}][[z,q]]$ such that for every smooth projective surface $S$ and every $L\in \Pic(S)$ we have
\begin{align*}\sum_{n\ge 0} \Ell(S^{[n]},\mu(L)\otimes E^r) p^n&=
 \frac{\LL^{(2,0)}(-\phi_{-2,1},z)^{\frac{1}{2}(L^2+r^2\chi(\O_S))}A_r^{LK_S} B_r^{K_S^2}}{\left(\LL(2\phi_{0,1},z)\left(1-r^2z\frac{\partial}{\partial z}\log \LL^{(2,0)}(\phi_{-2,1},z)\right)\right)^{\frac{\chi(\O_S)}{2}}},
\end{align*}
with the change of variables 
$p=z \LL^{(2,0)}(-\phi_{-2,1},z)^{r^2}$.
\end{theorem}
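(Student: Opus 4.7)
The plan is to combine the Ellingsrud--G\"ottsche--Lehn (EGL) universality principle for tautological integrals on Hilbert schemes with a Lagrange inversion identity encoding the $E^r$ insertion. By Hirzebruch--Riemann--Roch, $\Ell(S^{[n]},\mu(L)\otimes E^r)$ is a tautological integral over $S^{[n]}$ whose integrand is a universal polynomial in the Chern classes of $TS^{[n]}$ and of the tautological bundles defining $\mu(L)$ and $E$. Combining EGL universality with the multiplicativity of elliptic genera under disjoint unions produces a factorization
\[
\sum_{n\ge 0}\Ell(S^{[n]},\mu(L)\otimes E^r)\,p^n \;=\; f_1(p)^{L^2}\,f_2(p)^{LK_S}\,f_3(p)^{K_S^2}\,f_4(p)^{\chi(\O_S)},
\]
for universal power series $f_1,f_2,f_3,f_4 \in \Q[y^{\pm 1}][[q,p]]$ depending on $r$. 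The middle two will play the role of $A_r,B_r$ in the statement and only need to exist.

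Next I would identify $f_1$ and $f_4$ by specialization to test surfaces. On an abelian surface ($K_S=0$, $\chi(\O_S)=0$) only $f_1(p)^{L^2}$ survives. Since $c_1(E)=-\delta/2$ with $\delta$ the boundary divisor class, inserting $E^r$ amounts to inserting $e^{-r\delta/2}$ in the tautological integrand. In the Nakajima Fock-space description of $\bigoplus_n H^*(S^{[n]})$, $\delta$ acts via a Heisenberg-type vertex operator and the exponential $e^{-r\delta/2}$ should resum into a shift of the grading variable. The expected outcome is the change of variables $p=z\,\LL^{(2,0)}(-\phi_{-2,1},z)^{r^2}$, under which the $r=0$ formula of Theorem~\ref{ellr0} on abelian surfaces becomes $\LL^{(2,0)}(-\phi_{-2,1},z)^{L^2/2}$; this pins down $f_1$. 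To fix $f_4$ I would specialize to K3 surfaces ($K_S=0$, $\chi(\O_S)=2$), where the formula collapses to $f_1(p)^{L^2}f_4(p)^2$, and compute the left-hand side directly by Borisov--Libgober methods. The Jacobian
\[
\frac{dp}{dz} = \LL^{(2,0)}(-\phi_{-2,1},z)^{r^2}\bigl(1-r^2 z\partial_z\log\LL^{(2,0)}(\phi_{-2,1},z)\bigr),
\]
using the identity $\LL^{(2,0)}(-f,z)=\LL^{(2,0)}(f,z)^{-1}$, then produces the denominator factor $\bigl(1-r^2 z\partial_z\log\LL^{(2,0)}(\phi_{-2,1},z)\bigr)^{-\chi(\O_S)/2}$ by Lagrange--B\"urmann inversion, while the $r=0$ specialization of Theorem~\ref{ellr0} forces the remaining $\LL(2\phi_{0,1},z)^{-\chi(\O_S)/2}$.

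The main obstacle is establishing the change-of-variable identity $p=z\,\LL^{(2,0)}(-\phi_{-2,1},z)^{r^2}$ induced by $E^r$. Conceptually this says that on the Fock space $\bigoplus_n H^*(S^{[n]})$, $\exp(r c_1(E))$ intertwines with the natural $p$-grading as a Lagrange substitution; the quadratic dependence on $r$ reflects that only the normal-ordered square of the Heisenberg operator for $\delta$ contributes at the generating-function level. I would either attempt this by a direct vertex operator computation on abelian (or K3) surfaces, or, as a fallback, verify the substitution at sufficiently many pairs $(r,n)$ on enough explicit surfaces to force it from EGL universality alone.
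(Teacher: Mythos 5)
Your scaffolding matches the paper's: you invoke the EGL universality theorem to factor the generating function into four universal series, you observe that only the $L^2$ and $\chi(\O_S)$ series need to be computed explicitly (the $LK_S$ and $K_S^2$ series are the $A_r,B_r$ of the statement), and you correctly anticipate that a Lagrange--B\"urmann inversion with Jacobian $\bigl(1-r^2z\partial_z\log\LL^{(2,0)}(\phi_{-2,1},z)\bigr)$ produces the denominator and the change of variables. But the core of the argument --- the step that actually handles the $E^r$ insertion --- is missing. You flag it yourself as ``the main obstacle'' and offer only a heuristic: that $e^{rc_1(E)}$ should act as a vertex-operator shift of the grading variable on the Nakajima Fock space, with the $r^2$ coming from a normal-ordered square. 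This is not established, and your fallback (checking finitely many $(r,n)$ on explicit surfaces) does not amount to a proof either, since no mechanism is given that would make finitely many checks sufficient or even feasible in closed form.

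The paper closes exactly this gap by a completely different device: the Fujiki/Beauville--Bogomolov relations on the irreducible holomorphic symplectic manifold $S^{[n]}$ for $S$ a K3 surface. By Corollary~\ref{BBcor}, $\Ell(S^{[n]},M)=h_{S^{[n]}}\bigl(q_{S^{[n]}}(c_1(M))\bigr)$ for a single deformation-invariant polynomial $h_{S^{[n]}}$, and by Beauville's computation $q_{S^{[n]}}(L+rE)=L^2-2r^2(n-1)$. The $r=0$ case (Theorem~\ref{ellr0}) pins down $h_{S^{[n]}}$, and then the general-$r$ formula of Proposition~\ref{K3prop} follows for free, with the exponent $\frac{L^2}{2}-r^2(n-1)$ affine-linear in $n$ --- which is precisely the structure Lemma~\ref{Lagrange} needs. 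Note two further problems with your route: (i) your plan to pin down $f_1$ on abelian surfaces cannot use this mechanism, since $A^{[n]}$ is not irreducible holomorphic symplectic (K3 surfaces alone suffice to separate $F_{r,1}$ and $F_{r,4}$, as the paper does by varying $L^2$ with $\chi(\O_S)=2$ fixed); and (ii) your suggestion to ``compute the left-hand side directly by Borisov--Libgober methods'' for general $r$ fails because $E^r$ is not pulled back from $S^{(n)}$, so the pushforward along the Hilbert--Chow morphism used in the proof of Theorem~\ref{ellr0} does not interact with $\ch(E^r)$ by the projection formula. Until the identity $q_{S^{[n]}}(L+rE)=L^2-2r^2(n-1)$ (or an equivalent substitute) enters your argument, the proof is incomplete.
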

Specialising to $q=0$ again yields a formula for the $\chi_{-y}$-genus with values in $\mu(L)\otimes E^r$.
\begin{corollary}
For every smooth projective surface $S$ and every $L\in \Pic(S)$ we have
\begin{align*}\sum_{n\ge 0} \overline\chi_{-y}(S^{[n]},\mu(L)\otimes E^r) p^n&=\prod_{n>0} \left(\frac{(1-z^n)^2}{(1-z^ny)(1-z^n/y)}\right)^{\frac{n^2}{2}(L^2+r^2\chi(\O_S))}\\&\times
\frac{A_r(y,z,0)^{LK_S} B_r(y,z,0)^{K_S^2}}{\left(\widetilde\Delta(z,y)\left(1+r^2\sum_{n\ge 1}(y^{d}-2+y^{-d})\frac{n^3}{d^3} z^n\right)\right)^{\frac{\chi(\O_S)}{2}}}.
\end{align*}
with the change of variables 
$p=z \prod_{n>0} \left(\frac{(1-z^n)^2}{(1-z^ny)(1-z^n/y)}\right)^{n^2r^2}$.
\end{corollary}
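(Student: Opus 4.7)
The plan is to derive the corollary by specializing Theorem~\ref{ellrr} to $q=0$. The left-hand side uses that the elliptic genus with coefficients in a line bundle reduces at $q=0$ to the normalized $\chi_{-y}$-genus $\overline{\chi}_{-y}(X,\L)$; this normalization is precisely the one used in Corollary~\ref{ellch}, and is fixed by insisting that $\Ell(X,\L)|_{q=0}=\overline{\chi}_{-y}(X,\L)$. So the left-hand side of the corollary is obtained by setting $q=0$ on the left of Theorem~\ref{ellrr}.

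For the right-hand side I would compute each building block at $q=0$. Using $(y^{1/2}-y^{-1/2})^2=y-2+y^{-1}$ and $\wp(0,y)=\frac{1}{12}+\frac{1}{(y^{1/2}-y^{-1/2})^2}$ one finds
\begin{align*}
\phi_{-2,1}(0,y) &= y-2+y^{-1}, \\
\phi_{0,1}(0,y) &= (y^{1/2}-y^{-1/2})^2+12 = y+10+y^{-1}.
\end{align*}
In the infinite products defining $\LL$ and $\LL^{(a,b)}$, setting $q=0$ kills all factors with $m\ge 1$, so only the coefficients $c_{0,n}$ of $q^0$ contribute. A direct calculation then gives
\begin{align*}
\LL^{(2,0)}(-\phi_{-2,1},z)\big|_{q=0} &= \prod_{n>0}\left(\frac{(1-z^n)^2}{(1-z^ny)(1-z^n/y)}\right)^{n^2},\\
\LL(2\phi_{0,1},z)\big|_{q=0} &= \widetilde\Delta(z,y),
\end{align*}
which, raised to the exponents $\frac{1}{2}(L^2+r^2\chi(\O_S))$ and $\frac{\chi(\O_S)}{2}$, match the corresponding factors in the corollary. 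The change of variables $p=z\,\LL^{(2,0)}(-\phi_{-2,1},z)^{r^2}$ specializes to the stated one.

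It remains to specialize the derivative factor. Differentiating term by term and expanding geometrically,
$$z\frac{\partial}{\partial z}\log(1-z^ly^n) = -l\sum_{k\ge 1}z^{lk}y^{nk},$$
so inserting the $q=0$ coefficients $c_{0,1}=1,\,c_{0,0}=-2,\,c_{0,-1}=1$ of $\phi_{-2,1}$ and summing over $l$ yields
$$z\frac{\partial}{\partial z}\log\LL^{(2,0)}(\phi_{-2,1},z)\big|_{q=0} = -\sum_{l,k\ge 1}l^3(y^k-2+y^{-k})z^{lk} = -\sum_{n\ge 1}\sum_{d\mid n}\left(\frac{n}{d}\right)^{3}(y^d-2+y^{-d})\,z^n,$$
after setting $d=k$ and $n=lk$. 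Multiplying by $-r^2$ and adding $1$ reproduces the bracketed factor in the corollary (the sum over $d\mid n$ being implicit in the displayed formula). Finally, $A_r(y,z,0)$ and $B_r(y,z,0)$ are by definition the specializations of the universal series $A_r,B_r$ at $q=0$, so the remaining two factors on the right-hand side pass through unchanged.

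The argument is essentially bookkeeping; the only care needed is to track the exponents $l^a n^b c_{lm,n}$ through the $q=0$ specialization and to reorganize the resulting double sum as a divisor sum so that it matches the form stated in the corollary.
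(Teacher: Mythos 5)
Your proposal is correct and follows exactly the route the paper intends: specialize Theorem~\ref{ellrr} at $q=0$, using that $\cE(T_M)|_{q=0}=1$ so the elliptic genus degenerates to $\overline\chi_{-y}$, and evaluating each Borcherds lift at $q=0$ just as the paper does explicitly in the deduction of Corollary~\ref{ellch} (your computation of the logarithmic-derivative term, reorganized as a divisor sum, correctly supplies the bracketed factor, including the $\sum_{d\mid n}$ that is implicit in the paper's display). Nothing is missing.
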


Finally we show an analogue of \thmref{ellrr} for moduli spaces of sheaves on K3 surfaces $S$.
Fix $s\in \Z_{> 0}$. Let $H$ be an ample line bundle on $S$ and 
$M_S^H(s,c_1,c_2)$ the moduli space of $H$-semistable sheaves on $S$ of rank $s$ with Chern classes $c_1$, $c_2$.
We assume that $M_S^H(s,c_1,c_2)$ consists only of stable sheaves. 
For any choice of $s,c_1,c_2$, we denote $$\vd=\vd(s,c_1,c_2):=2sc_2-(s-1)c_1^2-2(s^2-1).$$
Let $r\in \Z$. Let $L\in \Pic(S)\otimes \Q$ with $c_1(L)-\frac{r}{s}c_1\in H^2(S,\Z)$.
 If $s$ divides $(c_1(L)-\frac{r}{s}c_1)c_1+r(\frac{c_1^2}{2}-c_2)$, we define a determinant line bundle  $\mu(L)\otimes E^{r}\in Pic(M^H_S(s,c_1,c_2))$.
 This is
the generalization of the line bundle with the same name on $S^{[n]}=M^H_S(1,0,n)$.
We obtain the following result.

\begin{theorem}\label{ellK3} Let $S$ be a K3 surface, $s\in \Z_{>0}$. Under the assumptions above we have
\begin{align*}\Ell(M^H_S(s,c_1,c_2),\mu(L)\otimes E^r)&=\Coeff_{p^{\vd/2}}
\left[\frac{\LL^{(2,0)}(-\phi_{-2,1},z)^{\frac{L^2}{2}+\frac{r^2}{s^2}}}{\LL(2\phi_{0,1},z)\left(1-\frac{r^2}{s^2}z\frac{\partial}{\partial z}\log \LL^{(2,0)}(\phi_{-2,1},z)\right)}\right],
\end{align*}
with the change of variables 
$p=z \LL^{(2,0)}(-\phi_{-2,1},z)^{\frac{r^2}{s^2}}$,  and
\begin{align*} \overline\chi_{-y}(M^H_S(s,c_1,n),\mu(L)\otimes E^r)&=\Coeff_{p^{\vd/2}}\left[
\frac{\prod_{n>0} \left(\frac{(1-z^{n})^2}{(1-z^{n}y)(1-z^{n}/y)}\right)^{n^2(\frac{L^2}{2}+\frac{r^2}{s^2})}}{\left(\widetilde\Delta(z,y)\left(1+\frac{r^2}{s^2}\sum_{n\ge 1}\sum_{d|n} (y^{d/2}-y^{-d/2})^2\frac{n^3}{d^3} z^{n}\right)\right)}\right],
\end{align*}
with the change of variables 
$p=z \prod_{n>0} \left(\frac{(1-z^{n})^2}{(1-z^{n}y)(1-z^{n}/y)}\right)^{\frac{n^2r^2}{s^2}}$.
\end{theorem}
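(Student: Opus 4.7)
The strategy is to reduce to \thmref{ellrr} via a deformation-equivalence argument between the moduli space and a Hilbert scheme of points. Under the stated assumptions the Mukai vector $v = (s, c_1, \tfrac{c_1^2}{2} - c_2 + s)$ is primitive with $v^2 = \vd - 2$, so by Yoshioka's theorem $M^H_S(s, c_1, c_2)$ is a smooth irreducible holomorphic symplectic variety of dimension $\vd$, deformation equivalent (as a hyperk\"ahler manifold) to the Hilbert scheme $S_0^{[\vd/2]}$ of an auxiliary K3 surface $S_0$. Moreover, Yoshioka's Mukai-type isomorphism combined with parallel transport identifies $(H^2(M^H_S(v), \Q), q)$ isometrically with $(H^2(S_0^{[\vd/2]}, \Q), q)$ for the Beauville--Bogomolov form $q$. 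Under this identification, $c_1(\mu(L) \otimes E^r)$ corresponds to a class of the form $L' - \tfrac{r}{s}\delta$ on $S_0^{[\vd/2]}$, where $\delta$ is the half-exceptional class and $L'$ satisfies $(L')^2 = L^2$; the coefficient $r/s$ reflects the rank-$s$ normalization of the universal family in the Mukai pairing.

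Next, I would invoke the deformation invariance of the elliptic genus with line-bundle coefficients: $\Ell(X, \L)$ is an integral over $X$ of a polynomial in Chern classes of $T_X$ and $c_1(\L)$, and is therefore constant in smooth proper families endowed with a flat rational cohomology class representing a line bundle. This reduces the computation to evaluating the universal formula from \thmref{ellrr} on $S_0^{[\vd/2]}$ with $L$ replaced by $L'$ and the integer $r$ formally replaced by the rational number $r/s$. The substitution is meaningful because the right-hand side of \thmref{ellrr} depends on $r$ only through $r^2$, apart from the factors $A_r^{LK_S}$ and $B_r^{K_S^2}$ which trivialize on a K3 since $K_{S_0} = 0$. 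Substituting $K_{S_0} = 0$ and $\chi(\O_{S_0}) = 2$, the exponent $\tfrac{1}{2}((L')^2 + (r/s)^2 \chi(\O_{S_0}))$ collapses to $\tfrac{L^2}{2} + \tfrac{r^2}{s^2}$, the denominator exponent $\chi(\O_S)/2$ becomes $1$, and extracting $\Coeff_{p^{\vd/2}}$ of the resulting generating series yields the first equality. The second equality, for $\overline\chi_{-y}$, follows by the same $q \to 0$ specialization used to deduce \corref{ellch} from \thmref{ellr0}.

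The principal obstacle is the first step: pinning down the exact coefficient $r/s$ of $\delta$ under Yoshioka's isomorphism, so that one recovers exactly $r^2/s^2$ in the exponent rather than some other function of $r$ and $s$. This amounts to a concrete Mukai-pairing calculation in $v^\perp$: identifying the image of the class corresponding to $E = \det(\O_S^{[n]})$ under the $\mu$-homomorphism of the moduli problem, using the Chern character of a universal family on $S \times M^H_S(v)$, and comparing it with the standard tautological classes on $S_0^{[\vd/2]}$. A secondary technical point is invoking Markman's monodromy results to guarantee that the required lattice identification can be realized by an actual polarized deformation along which $c_1(\mu(L) \otimes E^r)$ remains of type $(1,1)$.
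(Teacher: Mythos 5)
Your overall strategy---deformation equivalence of $M^H_S(s,c_1,c_2)$ to a Hilbert scheme of points plus the Beauville--Bogomolov form---is the same as the paper's, but the step you yourself flag as the ``principal obstacle'' is precisely the step that carries all the content, and your proposal leaves it unresolved. You never actually compute the Beauville--Bogomolov square of $c_1(\mu(L)\otimes E^r)$; you only describe the kind of calculation that would produce it. The paper does this computation without ever identifying the image class on a Hilbert scheme: writing $\mu(L)\otimes E^r=\lambda(v)$ for an explicit $v\in K_c$, it uses the isometry $q_{M^H_S(c)}(\theta_w(\phi(v)))=\<\phi(v),\phi(v)\>$ of Yoshioka/GNY to reduce everything to a Mukai-lattice self-intersection, which equals $\vd(v)-2$, and then a direct Riemann--Roch computation gives $\vd(v)=L^2+2-\tfrac{r^2}{s^2}(\vd(c)-2)$. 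This is where the exponent $\tfrac{L^2}{2}+\tfrac{r^2}{s^2}$ actually comes from; without it your argument does not close.

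Two further points where your route is shakier than necessary. First, your appeal to ``deformation invariance of the elliptic genus with line-bundle coefficients'' along a family carrying a flat $(1,1)$-class (hence the appeal to Markman) is both stronger than needed and problematic, since the class $L'-\tfrac{r}{s}\delta$ you want on the Hilbert scheme side is not integral and so is not the first Chern class of a line bundle there. The paper avoids this entirely via \corref{BBcor} (a Fujiki-type statement, \thmref{BBRR}): the polynomial $h_X$ with $\int_X\ELL(X)\exp(\alpha)=h_X(q_X(\alpha))$ depends only on the deformation class of $X$ itself, so one only needs $M^H_S(c)$ deformation equivalent to $S^{[\vd/2]}$ as a manifold---no line bundle is transported, and rational values of $q_X(\alpha)$ are handled automatically by evaluating the polynomial. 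Second, your ``formal replacement of $r$ by $r/s$'' in \thmref{ellrr} needs justification, since \thmref{ellrr} is stated for integer $r$; the paper instead goes back to \propref{K3prop}/\propref{K3sheaf} and applies \lemref{Lagrange} directly with the rational parameter $k=r^2/s^2$ (the lemma is stated for $k\in\Q$), which is the clean way to obtain the stated change of variables $p=z\,\LL^{(2,0)}(-\phi_{-2,1},z)^{r^2/s^2}$.
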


In the special case of K3 surfaces \thmref{ellK3}  in particular confirms the conjectures of \cite{GKW} about refinements of Verlinde formulas for moduli spaces of rank 2 sheaves on surfaces in the case of K3 surfaces.
The specialization $L=\O_X$ reproduces in the case of K3 surfaces the formulas of  \cite {GK} on the elliptic genus of moduli spaces of sheaves on surfaces.

\begin{acknowledgements}
I thank Don Zagier for helping me with the proof of \lemref{Lagrange}. This work grew out of collaboration with Martijn Kool. I thank him for many useful discussions.
\end{acknowledgements}

\section{Background material}
\subsection{Hilbert schemes of points}
Let $S$ be a smooth projective surface. We denote $S^{[n]}$ the Hilbert scheme of $n$ points on $S$. It is a smooth projective variety of dimension $2n$.
Let $S^{(n)}$ be the $n$-th symmetric power of $S$. The Hilbert-Chow morphism $\pi:S^{[n]}\to S^{(n)}, Z\mapsto supp(Z)$, sending a zero dimensional scheme to its support with multiplicities is a crepant resolution of $S^{(n)}$, i.e. it is birational and $\pi^*K_{S^{(n)}}=K_{S^{[n]}}$.
Let $Z_n(S)\subset S\times S^{[n]}$ be the universal subscheme, with projections $p:Z_n(S)\to S^{[n]}$, $q:Z_n(S)\to S$.
For a vector bundle $V$ of rank $r$ on $S$ the corresponding tautological vector bundle  is
$V^{[n]}:=p_*q^*V$, a vector bundle of rank $rn$ on $S^{[n]}$. This extends to a homomorphism $\vphantom{S}^{[n]}:K^0(S)\to K^0(S^{[n]})$ between the
Grothendieck groups of vector bundles. 
We put $E:=\det(\O_S^{[n]})$, and for a line bundle $L$ on $S$ we put $\mu(L):=\det((L-\O_S)^{[n]})$.
Let $\eta:S^n\to S^{(n)}$ be the natural projection. 
Let $L_n:=\eta_*(\otimes_{i=1}^n pr_i^*L)^{\mathfrak S_n}$ be the $\mathfrak S_n$-equivariant pushforward, where $pr_i:S^n\to S$ is the $i$-th projection. 
Then it is well-known that $\mu(L)=\eta^*(L_n)$, and from the definitions it follows that $\det(V^{[n]})=\mu(\det(V))\otimes E^{\rk(V)}$.

\subsection{Elliptic genus.}
For a compact complex manifold $M$, the $\chi_{-y}$-genus is $$\chi_{-y}(M):=\sum_p (-y)^p \chi(M,\Omega^p_M).$$ 
Usually we consider the normalized version
$\overline \chi_{-y}(M):=(-y)^{-\frac{\dim(M)}{2}}\chi_{-y}(M)$.
For $V\in K^0(M)$ the $\chi_{-y}$-genus with values in $V$ is 
$$\overline \chi_{-y}(M,V):=y^{-\frac{\dim(M)}{2}}\sum_p (-y)^p \chi(M,\Omega^p_M\otimes V).$$
For a rank $r$ vector bundle $V$ on $M$ put
\begin{align*}
\Lambda_t V := \sum_{n=0}^{r} [\Lambda^n V] \, t^n, \quad \Sym_t V := \sum_{n=0}^{\infty} [\Sym^n V] \, t^n.
\end{align*}
Write $y=e^{2\pi i z}$, $q=e^{2\pi i \tau}$.
Then for $W\in K^0(M)$,  the elliptic genus and the  elliptic genus with values in $W$ are defined by
\begin{align*}
\Ell(M) &:= \Ell(M,z,\tau):=\overline \chi_{-y}(M, \cE(T_M)), \\
\Ell(M,W)& :=\Ell(M,W,z,\tau):= \overline \chi_{-y}(M, \cE(T_M)\otimes W),
\end{align*}
with 
\begin{align*}
\cE(V) &:= \bigotimes_{n=1}^{\infty} \Lambda_{-yq^n} V^{\vee} \otimes \Lambda_{-y^{-1}q^n} V \otimes \Sym_{q^n} (V \oplus V^{\vee}).
\end{align*}
Let 
$$\theta(z,\tau):=-iq^{\frac{1}{8}}(y^{\frac{1}{2}}-y^{-\frac{1}{2}})\prod_{n>0}(1-q^n)(1-q^ny)(1-q^n/y)$$
be the classical Jacobi theta function, where we write $y=e^{2\pi i z}$, $q=e^{2\pi i \tau}$.
Let $c(M)=\prod_{j=1}^n(1+x_j)$ be a formal splitting of the total Chern class of $M$. Putting
$$\ELL(M):=\ELL(M,z,\tau)=\prod_{j} x_j\frac{\theta(\frac{x_j}{2\pi i}-z, \tau)}{\theta(\frac{x_j}{2\pi i}, \tau)}\in H^*(X,\Q)[y^{\pm \frac{1}{2}}][[q]],$$ it
follows from the definitions and the Riemann-Roch theorem that 
$$\Ell(M)=\int_M \ELL(M), \quad \Ell(M,W)=\int_M \ELL(M)\ch(W).$$

\subsection{Beauville-Bogomolov quadratic form} Let $X$ be a compact holomorphic symplectic manifold of dimension $\dim(X)=2m$.
We briefly review some properties of  the Beauville-Bogomolov quadratic form $q_X:H^2(X)\to \Q$ on $X$ from \cite[Sections~1.9--1.11]{H}.
 Note that  the odd Chern classes of $X$ vanish.
  \begin{theorem}\label{BBRR}
For any $\beta\in H^{4k}(X,\Q)$ in the sub-algebra generated by the Chern classes of $X$, there is a constant $c(\beta)\in \Q$, such that for all $\alpha\in H^2(X,\Q)$
$$\int_X \beta\alpha^{\dim(X)-2k}=c(\beta)q_X(\alpha)^{m-k}.$$
The quantity $c(\beta)$ is invariant under deformation of $X$.
\end{theorem}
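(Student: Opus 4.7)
The key observation is that both sides of the claimed identity are homogeneous polynomial functions of $\alpha\in H^2(X,\Q)$ of the same degree $2m-2k$, so the statement reduces to a polynomial identity on $H^2(X,\Q)$. Define
$$P_\beta(\alpha):=\int_X \beta\,\alpha^{2m-2k},$$
a homogeneous polynomial of degree $2(m-k)$ on $H^2(X,\Q)$. The goal is to identify $P_\beta$ with $c(\beta)\,q_X^{m-k}$ for a universal constant $c(\beta)$ depending only on $\beta$, and then argue that $c(\beta)$ is deformation invariant.

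The plan is to use invariant theory for the Beauville-Bogomolov form. The base case $\beta=1$, $k=0$ is Fujiki's relation $\int_X\alpha^{2m}=c_X\,q_X(\alpha)^m$, which (as in Huybrechts' treatment in \cite{H}) can be checked directly from the Hodge decomposition $H^2(X,\C)=\C\sigma\oplus H^{1,1}(X)\oplus\C\overline\sigma$ combined with the vanishings $\sigma^{m+1}=\overline\sigma^{m+1}=0$ and the very definition of $q_X$. For general $\beta$, the cleaner route is to observe that $\beta$, lying in the subalgebra generated by the Chern classes of $X$, is deformation invariant: in any smooth family $\pi\colon\mathcal X\to B$ of deformations of $X$ the Chern classes of the relative tangent bundle define a flat section of $R^{\bull}\pi_*\Q$, so $\beta$ does as well. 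Consequently $P_\beta$ is invariant under the monodromy representation $\pi_1(B)\to \Aut(H^2(X,\Q))$, and this representation preserves $q_X$.

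By the results of Verbitsky and Markman on the monodromy of irreducible holomorphic symplectic manifolds, the global monodromy group contains a finite-index subgroup of the full orthogonal group $O(H^2(X,\Q),q_X)$. The first fundamental theorem of classical invariant theory for $O(V,q)$ with $q$ non-degenerate says that the ring of polynomial invariants on $V$ is generated by $q$. Hence $P_\beta$, being a degree-$2(m-k)$ polynomial invariant under (a finite index subgroup of) $O(q_X)$, must be of the form $c(\beta)\,q_X^{m-k}$ for a unique scalar $c(\beta)\in\Q$. (If $X$ is not irreducible holomorphic symplectic, one first decomposes via the Beauville-Bogomolov decomposition theorem and works component by component.)

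Deformation invariance of $c(\beta)$ is then automatic: for any $\alpha_0\in H^2(X,\Q)$ with $q_X(\alpha_0)\ne 0$ one has $c(\beta)=P_\beta(\alpha_0)/q_X(\alpha_0)^{m-k}$, and both the cup-product pairing with the topological class $\beta$ and the Beauville-Bogomolov form are topological invariants of $X$, hence constant in a family. The main obstacle is the reliance on the Verbitsky-Markman monodromy density, which is genuine hyperk\"ahler input and not at all formal; without it, one must substitute a direct Hodge-theoretic expansion of $\alpha^{2m-2k}$ in a basis adapted to $H^{2,0}\oplus H^{1,1}\oplus H^{0,2}$ and match coefficients against $q_X^{m-k}$ using $\sigma^{m+1}=\overline\sigma^{m+1}=0$ and the type $(2k,2k)$ of $\beta$, which is elementary but notationally heavy.
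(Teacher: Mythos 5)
The first thing to say is that the paper does not prove this statement: it is quoted as background from \cite[Sections~1.9--1.11]{H} (the Fujiki relations), so the relevant comparison is with Huybrechts' argument. Your main route --- observing that $P_\beta(\alpha)=\int_X\beta\,\alpha^{2m-2k}$ is a monodromy-invariant homogeneous polynomial and invoking the first fundamental theorem for the orthogonal group --- is genuinely different and, for \emph{irreducible} holomorphic symplectic $X$, can be made correct, but at a very high price: the finite-index statement for the monodromy group is a consequence of Verbitsky's global Torelli theorem, vastly deeper than (and chronologically long after) the result being proven, and the standard development of that theory already uses the $\beta=1$ Fujiki relation. You also skip a needed step: invariance under a finite-index subgroup of $O(H^2(X,\Z),q_X)$ only gives invariance under its Zariski closure, so you must appeal to Borel density (using that the real form $O(3,b_2-3)$ is non-compact) before the FFT yields $P_\beta=c(\beta)q_X^{m-k}$. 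Finally, your parenthetical reduction of the reducible case to the irreducible one via the Beauville--Bogomolov decomposition cannot work, because the statement is simply false for reducible $X$: for $X=S_1\times S_2$ a product of K3 surfaces one gets $\int_X\alpha^4=6\,q_1(\alpha_1)q_2(\alpha_2)$, which is not a constant times the square of any quadratic form on $H^2(X)$. The theorem requires irreducibility (as in \cite{H}), which is all the paper ever uses.

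The genuine gap is in the ``elementary'' alternatives you claim as a fallback. Neither the base case $\int_X\alpha^{2m}=c\,q_X(\alpha)^m$ nor the general case follows from the Hodge decomposition of a single $X$ together with $\sigma^{m+1}=\bar\sigma^{m+1}=0$. Writing $\alpha=\lambda\sigma+\mu\bar\sigma+\gamma$ with $\gamma\in H^{1,1}(X)$ and using that $\beta$ has type $(2k,2k)$, type considerations only reduce the assertion to identities of the form $\int_X\beta(\sigma\bar\sigma)^a\gamma^{2(m-k-a)}=c_a\bigl(\int_X(\sigma\bar\sigma)^{m-1}\gamma^2\bigr)^{m-k-a}$ for all $\gamma\in H^{1,1}(X)$; for $a=0$ this is exactly the original statement restricted to $H^{1,1}(X)$, so nothing has been gained. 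The indispensable input in Fujiki's and Huybrechts' proof, which your write-up never invokes, is the local Torelli theorem: the period points of small deformations of $X$ sweep out a non-empty open subset of the quadric $\{q_X=0\}$, the class $\beta$ stays of type $(2k,2k)$ throughout the family, and it is this variation of complex structure that produces enough vanishing of $P_\beta$ on the quadric to force proportionality to $q_X^{m-k}$. So if you discard the Verbitsky input, the fallback as described does not close, and ``notationally heavy but elementary'' substantially understates what is missing.
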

\begin{corollary}\label{BBcor}
There exists a polynomial $h_X=h_X(z)$ with coefficients in $\Q[y^{\pm 1}][[q]] $ such that 
$$ \int_{X} \ELL(X)\exp(\alpha)=h_X(q_{X}(\alpha)), \quad \hbox{for all }\alpha\in H^2(X,\Z).$$
The polynomial $h$ is invariant under deformation of $X$.
\end{corollary}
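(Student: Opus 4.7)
The plan is to reduce the statement directly to \thmref{BBRR} by expanding $\ELL(X)$ in Chern classes and $\exp(\alpha)$ in powers of $\alpha$, and then to collect terms by cohomological degree.

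First I would write $\ELL(X) = \sum_{k \ge 0} E_k$, where $E_k \in H^{4k}(X,\Q)\otimes \Q[y^{\pm 1/2}][[q]]$ is the degree-$4k$ component when $\prod_j x_j\,\theta(x_j/(2\pi i)-z,\tau)/\theta(x_j/(2\pi i),\tau)$ is expanded as a universal polynomial in the Chern roots $x_j$ and hence in the Chern classes $c_i(X)$. Because $X$ is holomorphic symplectic, its odd Chern classes vanish, so all nonzero contributions come from products of even Chern classes, which live in cohomological degrees divisible by $4$; in particular the indexing by $k$ makes sense and no $4k+2$ components appear. Moreover, since $\dim X = 2m$ is even, the half-integer powers of $y$ present in individual Jacobi-theta factors cancel after taking the top-degree part, so each $E_k$ may be regarded as a class with coefficients in $\Q[y^{\pm 1}][[q]]$, and $E_k$ is universal in the Chern classes of $X$.

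Next I would expand $\exp(\alpha) = \sum_{j\ge 0} \alpha^j/j!$. Since the integrand must contribute in degree $4m = \dim_\R X$, only the pairing of $E_k$ with $\alpha^{2m-2k}$ survives:
\begin{equation*}
\int_X \ELL(X)\exp(\alpha)
= \sum_{k=0}^{m} \frac{1}{(2m-2k)!}\int_X E_k\,\alpha^{2m-2k}.
\end{equation*}
Each $E_k$ lies in the subalgebra generated by the Chern classes of $X$, so \thmref{BBRR} applies and gives a constant $c(E_k)\in \Q[y^{\pm 1}][[q]]$ (depending only on the universal polynomial expression of $E_k$ in Chern classes and on $X$) with
\begin{equation*}
\int_X E_k\,\alpha^{2m-2k} = c(E_k)\,q_X(\alpha)^{m-k}.
\end{equation*}

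Defining
\begin{equation*}
h_X(z) := \sum_{k=0}^{m} \frac{c(E_k)}{(2m-2k)!}\, z^{m-k} \in \Q[y^{\pm 1}][[q]][z],
\end{equation*}
one immediately gets $\int_X \ELL(X)\exp(\alpha) = h_X(q_X(\alpha))$ for every $\alpha\in H^2(X,\Z)$ (and in fact for every $\alpha\in H^2(X,\Q)$). Deformation invariance of $h_X$ follows coefficient-by-coefficient: the classes $E_k$ are given by a fixed universal polynomial in the Chern classes, and \thmref{BBRR} asserts that each constant $c(E_k)$ is itself deformation invariant. There is essentially no obstacle here beyond bookkeeping; the only minor point to verify carefully is the cancellation of $y^{\pm 1/2}$ in the top-degree pairing, which is guaranteed by $\dim X$ being even.
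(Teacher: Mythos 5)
Your argument is correct and is exactly the intended derivation: the paper states Corollary~\ref{BBcor} as an immediate consequence of \thmref{BBRR}, obtained by expanding $\ELL(X)$ into its components in the subalgebra generated by the Chern classes (which sit in degrees divisible by $4$ since the odd Chern classes vanish), pairing each with the appropriate power of $\alpha$, and applying \thmref{BBRR} termwise. The only cosmetic remark is that the cancellation of half-integer powers of $y$ already happens for the full class $\ELL(X)$ (there are $2m$ theta-quotient factors in total), not only for its top-degree pairing, but this does not affect the proof.
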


\section{The case $r=0$.}
In this section we will prove \thmref{ellr0}.
We start by reviewing some of the ideas and definitions of \cite{BL3}. 
For a Kawamata log-terminal pair $(Z,D)$ of a projective variety and a divisor $D$ in $Z$ with an action of a finite group $G$,
Borisov and Libgober define in \cite[Definition~3.6]{BL3} the orbifold elliptic class $\Ell_{\orb}(Z,D,G)=\Ell_{\orb}(Z,D,G,z,\tau)$.
In fact they first define it in \cite[Definition~3.2]{BL3}  in case $X$ is nonsingular  and the pair $(X,E)$ is also $G$-normal (see \cite[Definition~3.1]{BL3} for the definitions) by an explicit formula.
In the general case, they define  in \cite[Definition~3.6]{BL3} 
$\Ell_{\orb}(Z,D,G):=\rho_*\Ell_{\orb}(X,E,G)$ for $\rho:(X,E)\to (Z,D)$ a $G$-normal equivariant resolution.
We will write $\Ell_{\orb}(Z,G)$, $\Ell_{\orb}(Z,D)$, $\Ell_{\orb}(Z)$ in case $D=0$ and/or  $G$ is the trivial group.

If $X$ is a nonsingular projective variety, with an action of a finite group 
their formula specializes to 
\begin{align}\label{ellorb}
\Ell_{\orb}(X,G):=\frac{1}{|G|}\sum_{gh=hg}\sum_{Z}[Z] \Big(\prod_{\lambda(g)=\lambda(h)=0} \prod_{j} x_{\lambda,j}\Big)
\prod_{\lambda,j}\frac{\theta(\frac{x_{\lambda,j}}{2\pi i} +\lambda(g)-\tau \lambda(h)-z)}{\theta(\frac{x_{\lambda,j}}{2\pi i } +\lambda(g)-\tau\lambda(h))}e^{2\pi i \lambda(h)z}.
\end{align}
Here $Z$ runs over the irreducible components of the common fixpoint set of $g$ and $h$, and $[Z]$ is the class of $Z$ in the Chow group of $X$.
The restriction of $T_X$ to Z splits into linearized bundles according to the $[0,1)$-valued characters $\lambda$ of $\<g,h\>$. We denote by $x_{\lambda,j}$ the elements of a formal splitting of the  total Chern class of the bundle with character $\lambda$.
If $G$ acts effectively on $(Z,D)$ and   $(Z/G,D/G)$ is the quotient pair in the sense of \cite[Definition~2.7]{BL3}, they show in \cite[Theorem.~5.3]{BL3} that 
$\psi_*\Ell_{\orb}(Z,D)=\Ell_{\orb}(Z/G,D/G)$ for the quotient morphism $\psi:Z\to Z/G$. This is in particular true for the pairs $(Z,0)$, $(Z/G,0)$, if $Z$ is nonsingular and  $G$ is   acting freely in  codimension $1$.

Now consider the action of $\mathfrak S_n$ on $S^n$, and recall the quotient morphism $\eta:S^n\to S^{(n)}$ and the Hilbert Chow morphism $\pi:S^{[n]}\to S^{(n)}$. 
As $S^{[n]}$ is nonsingular we have $\ELL(S^{[n]})=\Ell_{\orb}(S^{[n]})$. As $\pi$ is a crepant resolution, \cite[Theorem~3.5]{BL3} implies that $\pi_*\ELL(S^{[n]})=\Ell_{\orb}(S^{(n)})$.
Thus we find by the above
\begin{equation}\label{BLeq}
\pi_*\ELL(S^{[n]})=\Ell_{\orb}(S^{(n)})=\eta_* \Ell_{\orb}(S^n,\mathfrak S_n).
\end{equation}
Now let $L$ be a line bundle on $S$, then we have
\begin{align*}
\Ell(S^{[n]},\mu(L))&=\int_{S^{[n]}}\ELL(S^{[n]})\ch(\mu(L))\\&=\int_{S^{(n)}}\pi_*\big(\ELL(S^{[n]})\big)\ch(L_n)\\
&=\int_{S^n}\Ell_{\orb}(S^n,\mathfrak S_n)  \ch\Big(\sum_{i=1}^n pr_i^* L\Big).
\end{align*}
In the second line we have used $\pi^*(L_n)=\mu(L)$ and  the projection formula,  and in the third line we have used \eqref{BLeq}, $\eta^*(L_n)=\otimes_{i=1}^n pr_i^*L$ and again the projection formula.

Let  $x_1,x_2$ be the Chern roots of $T_S$. Then
\begin{equation}\label{ellsl}
\begin{split}
&\Ell(S,L)=\int_S (1+L+\frac{L^2}{2})\prod_{j=1}^2 x_i\frac{\theta(\frac{x_j}{2\pi i}-z,\tau)}{\theta(\frac{x_j}{2\pi i},\tau)} \\
&=\Ell(S)-\frac{K_SL}{2} \Coeff_{x^1}\left(x\frac{\theta(\frac{x}{2\pi i}-z,\tau)}{\theta(\frac{x}{2\pi i},\tau) }\right)^2
+\frac{L^2}{2}\Coeff_{x^2}\left(x\frac{\theta(\frac{x}{2\pi i}-z,\tau)}{\theta(\frac{x}{2\pi i},\tau) }\right)^2\\
&=
\Ell(S)+\frac{K_SL}{2}y\frac{\partial}{\partial y} \phi_{-2,1} +\frac{L^2}{2}\phi_{-2,1}.
\end{split}
\end{equation}

Now we prove  \thmref{ellr0} by adapting the proof of \cite[Theorem~6.1]{BL3}. Note that in the notations of \cite{BL3} we have $D=0$, which  leads to many simplifications.

Let $(g,h)$ be a commuting pair in $\mathfrak S_n$. We sum up the description of the action of $g,h$ and their fixpoint sets in the proof of \cite[Theorem~6.1]{BL3}. 
We have a decomposition $\{1,\ldots,n\}=J_1\cup \ldots\cup J_m$ into the orbits of the subgroup generated by $g,h$. Thus 
the action of $(g,h)$ on $S^n$ restricts to an action on each of the corresponding products  $S^{J_l}$.
Furthermore we can write $|J_l|=a_lb_l$ for positive integers $a_l,b_l$, and up to reordering of the elements of $J_l$ the action of $(h,g)$ on $S^{J_l}$ can be described as follows. Write  $(y_{i,j})_{i\in \Z/a_lZ,\ j\in \Z/b_l\Z}$ the components of elements
of $y\in S^{J_l}$. Then the action of $g,h$ on $S^{J_l}$ is given by $h(y_{i,j})=y_{i,j+1}$, $g(y_{i,j})=y_{i+1,j}$ for $0\le i<a_l-1$, $g(y_{a_l-1,j})=y_{0,j+s}$, for some $s\in \{0,\ldots,b_l-1\}$, and $s$ determines the 
action of $(g,h)$ on $J_l$ uniquely.
The fixpoint set $(S^{J_l})^{g,h}$ is $S$ embedded via the diagonal map $j_l:S\to S^{J_l}$.

Changing their notation slightly, we  denote for $(a,b,s):=(a_l,b_l,s_l)$ by
$$F_{a,b,s}:= j^*_{l}\left(\Big(\prod_{\lambda(g)=\lambda(h)=0}\prod_{j} x_{\lambda,j}\Big) \prod_{\lambda,j} \frac{\theta(\frac{x_{\lambda,j}}{2\pi i} +\lambda(g)-\tau \lambda(h)-z)}{\theta(\frac{x_{\lambda,j}}{2\pi i } +\lambda(g)-\tau\lambda(h))}e^{2\pi i \lambda(h)z}\right)$$
 the pullback of the contribution of the restriction of the pair $(f,g)$  to $\Ell_{\orb}(S^{J_l},\mathfrak S_{J_l})$ in  \eqref{ellorb}
 multiplied by $a!b!$.
Then it is shown in \cite[Lemma~6.4]{BL3} that $$F_{a,b,s}=\prod_{j=1,2} \frac{x_j\theta(\frac{ax_j}{2\pi i } -az,\frac{a\tau-s}{b})}{\theta(\frac{ax_j}{2\pi i },\frac{a\tau-s}{b})}.$$
Note that the left hand side is 
$$\frac{1}{a^2}\prod_{j=1,2} \frac{ax_j\theta(\frac{ax_j}{2\pi i } -az,\frac{a\tau-s}{b})}{\theta(\frac{ax_j}{2\pi i },\frac{a\tau-s}{b})}=\sum_{k=0}^2 a^{k-2}\ELL(S,az,\frac{a\tau-s}{b})_k,$$
where $(\ )_k$ denotes the part in degree $k$. As $j_l^*\eta^*(L_n)=L^{ab}$, we obtain
\begin{align*}\int_S F_{a,b,s} \ch(j_l^*\eta^*(L_n))
&=
\int_S\Big( \sum_{k=0}^2 a^{k-2}\ELL(S,az,\frac{a\tau-s}{b})_k (ab)^{2-k} \ch_{2-k}(L)\Big)\\&=
\Ell(S,L^b,az,\frac{a\tau-s}{b}).
\end{align*}
By definition it is clear that the contribution of $(g,h)$ to 
$\int_{S^{n}} \Ell_{\orb}(S^n,\mathfrak S_n)\eta^*L_n$ is 
$$\frac{1}{n!}\prod_{l=1}^m \int_SF_{a_l,b_l,s_l}j_l^*\eta^*(L_n).$$
Thus arguing as after \cite[Lemma~6.6]{BL3}, writing  
$$\Ell(S^{[n]})=\sum_{l,n}c_{l,n}y^lq^n, \quad \phi_{-2,1}=\sum_{l,n} d_{l,n}y^lq^n,$$ and
using \eqref{ellsl} in the third line,
 we obtain
\begin{align*}
\sum_{n\ge 0} \Ell&(S^{[n]},\mu(L)) p^n=\sum_{n\ge 0} p^n
\Big(\int_{S^n}\Ell_{\orb}(S^n,\mathfrak S_n)\eta^*L_n\Big)\\
&=
\exp\left(\sum_{a,b>0}\sum_{s=0}^{b-1}\frac{p^{ab}}{ab} \Ell(X,L^b,az,\frac{a\tau-s}{b})\right)\\
%
&=\exp\left(\sum_{a,b>0}\sum_{m,l}\sum_{s=0}^{b-1}\Big(c_{m,l}+\Big(bl\frac{LK_S}{2}+b^2\frac{L^2}{2}\Big)d_{m,l}\Big)\frac{p^{ab}}{ab}y^{al}q^{\frac{am}{b}}e^{2\pi i\frac{ms}{b}}\right)\\
&=\exp\left(\sum_{a,b>0}\sum_{m,l}\Big(c_{mb,l}+\Big(bl\frac{LK_S}{2}+b^2\frac{L^2}{2}\Big)d_{mb,l}\Big)\frac{p^{ab}}{a}y^{al}q^{am}\right)\\
&=
\prod_{b=1}^\infty\prod_{m,l} (1-p^by^lq^m)^{-c_{mb,l}-lb d_{mb,l}\frac{K_SL}{2}-b^2d_{mb,l}\frac{L^2}{2}}.
\end{align*}
This proves \thmref{ellr0}.\hfill\qed
\vskip\baselineskip
To deduce \corref{ellch} from \thmref{ellr0}, we note that
$\overline \chi_{-y}(S^{[n]},\mu(L))=\Ell(S^{[n]},\mu(L))\big|_{q=0}$, and by definition $\LL(f,p)\big|_{q=0}=\LL(f|_{q=0},p)$.
Thus we have 
\begin{align*}
\LL(-\phi_{-2,1},p)\big|_{q=0}&=\LL(-y+2-y^{-1},p)=\prod_{n>0}\frac{(1-p^n)^2}{(1-yp^n)(1-y^{-1}p^n)},\\
\LL^{(2,0)}(-\phi_{-2,1},p)\big|_{q=0}&=\prod_{n>0}\left(\frac{(1-p^n)^2}{(1-yp^n)(1-y^{-1}p^n)}\right)^{n^2},\\ \LL^{(1,1)}(-\phi_{-2,1},p)\big|_{q=0}&=\LL^{(1,0)}(y^{-1}-y,p)=\prod_{n>0}\left(\frac{1-y^{-1}p^n}{1-yp^n}\right)^{n},\\
\LL((\wp+h)\phi_{-2,1},p)\big|_{q=0}&=\LL(1,p)=\overline \eta(p)\quad\mathrm{and}\quad \LL(\phi_{0,1},p)\big|_{q=0}=\LL(y+10+y^{-1})=\Delta(y,p)^{1/2}.
\end{align*}
\qed

\section{The case of Hilbert schemes of points on K3 surfaces}
Now we want to consider the case of Hilbert schemes of points on K3 surfaces. We obtain a formula for all $r$.
\begin{proposition}\label{K3prop}
Let $S$ be a K3 surface and $L\in \Pic(S)$. Then 
$$\Ell(S^{[n]},\mu(L)\otimes E^r)=\Coeff_{p^n}\left[{\big(\LL^{(2,0)}(-\phi_{-2,1},p)\big)^{\frac{L^2}{2}-r^2(n-1)}\LL(-2\phi_{0,1},p)}\right].$$
\end{proposition}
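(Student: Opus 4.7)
The plan is to leverage the $r=0$ case, which is the content of \thmref{ellr0}, and extend it to arbitrary $r$ using the Beauville--Bogomolov quadratic form on $S^{[n]}$ via \corref{BBcor}.

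First, I would specialize \thmref{ellr0} to a K3 surface $S$. Since $K_S=0$ and $\chi(\O_S)=2$, one has $\Ell(S)=2\phi_{0,1}$, so all the $LK_S$ and $K_S^2$ factors disappear and
$$\sum_{n\ge 0}\Ell(S^{[n]},\mu(L))\,p^n=\frac{\LL^{(2,0)}(-\phi_{-2,1},p)^{L^2/2}}{\LL(2\phi_{0,1},p)}=\LL^{(2,0)}(-\phi_{-2,1},p)^{L^2/2}\,\LL(-2\phi_{0,1},p),$$
using that $\LL(-f,p)=\LL(f,p)^{-1}$, which is immediate from the product definition. Extracting the coefficient of $p^n$ settles the $r=0$ case of \propref{K3prop}.

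Next, I would invoke that $S^{[n]}$ is a compact holomorphic symplectic manifold of dimension $2n$. Beauville's description $\Pic(S^{[n]})=\mu(\Pic(S))\oplus\Z c_1(E)$ together with his computation of the Beauville--Bogomolov form $q=q_{S^{[n]}}$ gives $q(\mu(L))=L^2$, $q(c_1(E))=-2(n-1)$, and $\mu(L)\perp c_1(E)$. Hence
$$q\bigl(c_1(\mu(L)\otimes E^r)\bigr)=L^2-2r^2(n-1).$$
Applying \corref{BBcor} to $X=S^{[n]}$ and $\alpha=c_1(\mu(L)\otimes E^r)$, and using $\ch(\mu(L)\otimes E^r)=\exp(\alpha)$, yields
$$\Ell(S^{[n]},\mu(L)\otimes E^r)=h_{S^{[n]}}\bigl(L^2-2r^2(n-1)\bigr),$$
where $h_{S^{[n]}}$ is a polynomial of degree $\le n$. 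By the deformation invariance in \corref{BBcor} (all K3 Hilbert schemes $S^{[n]}$ lie in one connected deformation family), $h_{S^{[n]}}$ depends only on $n$, not on the particular K3 surface $S$.

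To identify this polynomial, I would observe that the $r=0$ formula is itself polynomial in $L^2$: expanding $\LL^{(2,0)}(-\phi_{-2,1},p)^{L^2/2}=\exp\bigl((L^2/2)\log\LL^{(2,0)}(-\phi_{-2,1},p)\bigr)$ and reading off the coefficient of $p^n$ produces a polynomial of degree $\le n$ in $L^2$. Setting
$$F(t):=\Coeff_{p^n}\bigl[\LL^{(2,0)}(-\phi_{-2,1},p)^{t/2}\,\LL(-2\phi_{0,1},p)\bigr],$$
the two polynomials $h_{S^{[n]}}(t)$ and $F(t)$, both of degree $\le n$, agree at $t=L^2$ for every pair $(S,L)$ with $S$ a K3 surface and $L\in\Pic(S)$. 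Since polarized K3 surfaces realize infinitely many even integer values of $L^2$, the two polynomials coincide identically, and substituting $t=L^2-2r^2(n-1)$ finishes the proof. The main obstacle is essentially bookkeeping: pinning down the Beauville--Bogomolov normalizations on the $c_1(E)$ direction (including sign) and verifying the orthogonality $\mu(L)\perp c_1(E)$; once these standard facts from Beauville--Fogarty are cleanly in hand, the polynomial-interpolation step is purely formal.
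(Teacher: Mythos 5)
Your proposal is correct and follows essentially the same route as the paper: specialize \thmref{ellr0} to K3 surfaces, apply \corref{BBcor} to reduce $\Ell(S^{[n]},\mu(L)\otimes E^r)$ to a polynomial in the Beauville--Bogomolov square $q_{S^{[n]}}(L+rE)=L^2-2r^2(n-1)$ (Beauville's computation, cited in the paper as \cite[lem.~9.1]{B}), and identify that polynomial from the $r=0$ case. The paper leaves the final polynomial-interpolation step implicit, which you spell out correctly.
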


\begin{proof}
The Hilbert scheme $S^{[n]}$  is a holomorphic symplectic  manifold; let $q_{S^{[n]}}$ be its Beauville-Bogolomov quadratic form. 
 By \corref{BBcor} there exists a polynomial $h_{S^{[n]}}(z)$ with coefficients in  $\Q[y^{\pm 1}][[q]] $ such that 
\begin{equation}\label{intq} \Ell(S^{[n]},M)=h_{S^{[n]}}\big(q_{S^{[n]}}(c_1(M))\big), \quad \hbox{for all }M\in \Pic(S^{[n]}).\end{equation}
It is shown in \cite[lem.~9.1] {B} that for $L\in \Pic(S)$ we have 
\begin{equation}\label{qE}
q_{S^{[n]}}(L+rE)=L^2-2r^2(n-1).\end{equation}
Therefore \propref{K3prop} follows from  \thmref{ellr0}.\end{proof}


Now we want to deduce \thmref{ellrr} from \propref{K3prop}.
We know $$\Ell(S^{[n]},\mu(L)\otimes E^r)=\int_{S^{[n]}} \ELL(S^{[n]}) \ch(\mu(L)\otimes E^r),$$
where $\ELL$ is the genus associated to a power series, and $\mu(L)\otimes E^r=\det((L+(r-1)\O_S))^{[n]}$.
Therefore  \cite[Theorem~4.2]{EGL} applies and gives the following.

\begin{corollary}\label{univers}
For every $r\in \Z$, there are universal power series $F_{r,1},F_{r,2},F_{r,3},F_{r,4}\in \Q[y^{\pm 1}][[q,p]]$ such that for every smooth projective surface $S$ and every 
$L\in\Pic(S)$ we have
$$\sum_{n\ge 0} \Ell(S^{[n]},\mu(L)\otimes E^r) p^n=F_{r,1}^{L^2/2} F_{r,2}^{LK_S/2} F_{r,3}^{K_S^2} F_{r,4}^{\chi(\O_S)}.$$
\end{corollary}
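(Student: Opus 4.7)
The strategy is to identify $\ELL(S^{[n]})\ch(\mu(L)\otimes E^r)$ as an integrand of the type handled by the Ellingsrud--G\"ottsche--Lehn universality theorem, and then to invoke \cite[Theorem~4.2]{EGL} directly.

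First I would rewrite $\mu(L)\otimes E^r$ as a determinant of a tautological bundle. Setting $V:=L+(r-1)\O_S\in K^0(S)$, so that $\det V=L$ and $\rk V=r$, the identity $\det(W^{[n]})=\mu(\det W)\otimes E^{\rk W}$ recalled in Section~2.1 gives $\mu(L)\otimes E^r=\det(V^{[n]})$. Consequently $\ch(\mu(L)\otimes E^r)=\exp(c_1(V^{[n]}))$ is a fixed power series in the Chern classes of the tautological bundle $V^{[n]}$. Combined with the fact that $\ELL(S^{[n]})$ is by definition the multiplicative genus of $T_{S^{[n]}}$ associated to a fixed power series with coefficients in $\Q[y^{\pm 1/2}][[q]]$, this exhibits
\[
\Ell(S^{[n]},\mu(L)\otimes E^r)=\int_{S^{[n]}}\ELL(S^{[n]})\,\ch(\mu(L)\otimes E^r)
\]
as the integral over $S^{[n]}$ of a universal power series (depending on $n$ and $r$ but not on $(S,L)$) with coefficients in $\Q[y^{\pm 1}][[q]]$ in the Chern classes of the tautological bundles $T_{S^{[n]}}$ and $V^{[n]}$.

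This is precisely the input required by \cite[Theorem~4.2]{EGL}, which asserts that the generating series $\sum_n p^n\int_{S^{[n]}}(\cdots)$ of any such tautological integral factors as a universal monomial $G_1^{L^2}G_2^{LK_S}G_3^{K_S^2}G_4^{c_2(S)}$ with $G_i\in\Q[y^{\pm 1}][[q,p]]$ independent of $(S,L)$. Trading $c_2(S)$ for $\chi(\O_S)$ via $c_2(S)=12\chi(\O_S)-K_S^2$, and rescaling exponents by a factor of $2$ in the first two slots, then produces the desired power series $F_{r,1},F_{r,2},F_{r,3},F_{r,4}$ realizing the claimed form $F_{r,1}^{L^2/2}F_{r,2}^{LK_S/2}F_{r,3}^{K_S^2}F_{r,4}^{\chi(\O_S)}$.

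The substantive content is furnished entirely by EGL, whose multiplicative factorization ultimately stems from a cobordism / disjoint-union argument (equivalently, the statement that the logarithm of the generating function is $\Q$-linear in the four Chern numbers $L^2,LK_S,K_S^2,\chi(\O_S)$). On our side the only thing to verify is the compatibility check above; the only mild subtlety will be the presence of the formal parameters $y,q$, but since both $\ELL$ and $\ch$ are given by fixed power series with coefficients in $\Q[y^{\pm 1/2}][[q]]$, the EGL argument applies coefficient-by-coefficient in those parameters, yielding the universal series $F_{r,i}$ in the required ring $\Q[y^{\pm 1}][[q,p]]$.
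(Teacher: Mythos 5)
Your proposal is correct and follows the paper's own proof essentially verbatim: the paper likewise observes that $\mu(L)\otimes E^r=\det((L+(r-1)\O_S)^{[n]})$, so that $\Ell(S^{[n]},\mu(L)\otimes E^r)=\int_{S^{[n]}}\ELL(S^{[n]})\ch(\mu(L)\otimes E^r)$ is a universal tautological integral, and then cites \cite[Theorem~4.2]{EGL}. Your extra remarks on converting $c_2(S)$ to $\chi(\O_S)$ and on treating $y,q$ coefficientwise are harmless elaborations of the same argument.
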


 Using \corref{univers}, in order to prove \thmref{ellrr}, we only need show the formulas for $F_{r,1}$ and $F_{r,4}$, which 
 are determined by their values for $S$ a K3 surface. 
 So let again $S$ be a K3 surface, and $L\in \Pic(S)$.
Then by \propref{K3prop} we get 
$$\sum_{n\ge 0} \Ell(S^{[n]},\mu(L)\otimes E^r)p^n =\sum_{n\ge 0} p^n\Coeff_{p^n}\left[\big(\LL^{(2,0)}(-\phi_{-2,1})(p)^{\frac{L^2}{2}-(n-1)r^2}\LL(-2\phi_{0,1})(p)\big)\right].$$
Thus \thmref{ellrr} follows from the following lemma.

\begin{lemma}\label{Lagrange} Let $g(p)=1+\sum_{i\ge 1} a_i p^i$ be a power series starting with $1$, let $f(p)$ be a power series.
Fix $w,k\in \Q$. Put 
$$h(p):=\sum_{n\ge 0} p^n\Coeff_{p^n}\big[g(p)^{w-kn}f(p)\big].$$
Then $$h(p)=\frac{g(z)^{w}f(z)}{1+kz\frac{d}{dz}\log(g(z))}\quad \hbox{where }p=zg(z)^k.$$
\end{lemma}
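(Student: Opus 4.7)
The plan is to recognize the identity as a direct consequence of the Lagrange--B\"urmann inversion formula in its kernel form. The lemma is purely a statement about formal power series, and nothing analytic is needed: since $g(0)=1$ the fractional power $g^{w-kn}$ is a well-defined formal power series in $p$ for any $w,k\in\Q$.

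First I would rewrite the coefficient extraction using a dummy variable $z$:
$$[p^n]\bigl(g(p)^{w-kn} f(p)\bigr) = [z^n]\bigl(H(z)\,\phi(z)^n\bigr), \qquad H(z):=g(z)^w f(z),\ \phi(z):=g(z)^{-k},$$
so that
$$h(p) \;=\; \sum_{n\ge 0} p^n\,[z^n]\bigl(H(z)\phi(z)^n\bigr).$$
The change of variables $p=zg(z)^k$ stated in the lemma is exactly the implicit equation $z=p\phi(z)$, and since $\phi(0)=1$ this determines $z$ uniquely as a formal power series in $p$ with $z(0)=0$ and $z'(0)=1$.

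Next I would invoke the kernel form of the Lagrange--B\"urmann formula: if $z=p\phi(z)$ with $\phi(0)\ne 0$, then for every formal power series $H$,
$$\sum_{n\ge 0} p^n\,[z^n]\bigl(H(z)\phi(z)^n\bigr) \;=\; \frac{H(z)}{1-p\phi'(z)}.$$
This is equivalent to the more familiar residue form $[p^n]H(z) = \tfrac{1}{n}[z^{n-1}](H'(z)\phi(z)^n)$ for $n\ge 1$, and follows from it either by expanding the kernel $(1-p\phi'(z))^{-1}$ as a geometric series and matching coefficients, or directly from the residue calculus of formal series.

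Finally I would substitute back. The numerator $H(z)=g(z)^w f(z)$ already matches the target. For the denominator, differentiating $\phi(z)=g(z)^{-k}$ and using $p=zg(z)^k$ gives
$$p\phi'(z) \;=\; z g(z)^k\cdot(-k)g(z)^{-k-1}g'(z) \;=\; -kz\,\frac{g'(z)}{g(z)} \;=\; -kz\,\frac{d}{dz}\log g(z),$$
so $1-p\phi'(z)=1+kz\tfrac{d}{dz}\log g(z)$, producing exactly the claimed formula. The only nontrivial input is the kernel form of Lagrange--B\"urmann itself; the rest is bookkeeping, and I expect no real obstacle beyond locating that version of the formula in the literature (or supplying the short derivation of it from the standard version).
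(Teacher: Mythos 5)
Your proof is correct, and it reaches the formula by a genuinely more direct route than the paper. Both arguments are ultimately instances of Lagrange inversion, but the paper first reduces to $w=0$, $k=1$, then encodes the $n$-dependent exponent via the operator substitution $u\mapsto p\frac{d}{dp}$ (writing $g(p)^{-u}f(p)=\sum_n h_n(u)p^n$ and replacing $u$ by $p\frac{d}{dp}$), passes to the additive variable $p=e^x$, and finally invokes the exponential form $\sum_{n\ge 0}\frac{1}{n!}\frac{d^n}{dx^n}\bigl(a(x)^n\phi(x)\bigr)=\frac{\phi(\eta)}{1+a'(\eta)}\big|_{\eta=x-a(\eta)}$ of Lagrange inversion. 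You instead observe at the outset that $\Coeff_{p^n}\bigl[g(p)^{w-kn}f(p)\bigr]=\Coeff_{z^n}\bigl[H(z)\phi(z)^n\bigr]$ with $H=g^wf$, $\phi=g^{-k}$, so that $h(p)$ is literally the left-hand side of the kernel (diagonal) form of Lagrange--B\"urmann for the equation $z=p\phi(z)$, and the denominator $1-p\phi'(z)=1+kz\frac{d}{dz}\log g(z)$ falls out by a one-line computation; no reduction and no change of variable to $e^x$ is needed. The trade-off is that you must take the kernel form $\sum_{n\ge 0}p^n\,\Coeff_{z^n}\bigl[H(z)\phi(z)^n\bigr]=H(z)/(1-p\phi'(z))$ as your black box rather than the more commonly quoted residue form; you correctly flag this and indicate how to derive one from the other, and that identity is standard (it appears, e.g., in Gessel's survey on Lagrange inversion), so there is no gap. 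Your version also handles general $w,k$ uniformly and makes the formal-power-series nature of the statement transparent.
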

\begin{proof}
Without loss of generality we can assume that $w=0$ (otherwise replace $f(p)$ by $g(p)^w f(p)$), 
and $k=1$ (otherwise replace $g(p)$ by $g(p)^{k}$ and note that $z\frac{d}{dz}\log(g(z)^{k})=kz\frac{d}{dz}\log(g(p))$).
We can describe $h(p)$ as follows: for a variable $u$ write $g(p)^{-u}f(p)=\sum_{n\ge 0}h_n(u) p^n$, with $h_n(u)$ a polynomial in $u$, then $h(p)=\sum_{n\ge 0}h_n(p\frac{d}{dp}) p^n$, i.e. move all factors of $u$ to the left and then
replace $u$ by $p\frac{d}{dp}$.
We make  the variable transformation  $p=e^x$, so that $p\frac{p}{dp}=\frac{d}{dx}$, and we write $g(p)=e^{a(x)}$, $f(p)=\phi(x)$.
Then we obtain
\begin{align*}
h(p)&=
\sum_{n\ge 0} \frac{(-1)^n}{n!}\frac{d^n}{dx^n}\big(a(x)^n\phi(x)\big)\\
&=\frac{\phi(\eta)}{1+a'(\eta)}\Big|_{\eta=x-a(\eta)}\\&=\frac{f(z)}{1+z\frac{d}{dz}\log g(z)}\Big|_{p=zg(z)}.
\end{align*}
In the second line we have used the Lagrange inversion formula
$$\sum_{n=0}^\infty \frac{1}{n!}\frac{d^n}{dx^n} \big(a(x)^n f(x)\big)=\frac{f(z)}{1-a'(z)}\Big|_{z=x+a(z)}.$$
In the third line we put $z:=e^\eta$, thus $p=e^{x}=e^{\eta+a(\eta)}=zg(z)$.
\end{proof}

\section{Moduli of sheaves on K3 surfaces}

In this section we extend our results to moduli spaces of sheaves on K3 surfaces.
First we briefly recall determinant line bundles on moduli spaces of sheaves, for details see e.g. \cite[Section~1.1]{GNY}, \cite[Chapter~8]{HL}.

For a Noetherian scheme $Y$ denote by $K(Y)$ and $K^0(Y)$ the Grothendieck groups of coherent sheaves and locally free sheaves.
If $Y$ is nonsingular and projective, then $K(Y)=K^0(Y)$. We denote by $[\mathcal F]$ the class of a sheaf $\mathcal F$ in $K(Y)$.
For a proper morphism $f:Z\to Y$ the pushforward $f_!:K(Z)\to K(Y)$ is defined by $f_!([\mathcal F]):=\sum_{i}(-1)^i[R^if_*\mathcal F]$.
For any morphism $f:Z\to Y$ the pullback $f^*:K^0(Y)\to K^0(Z)$ is defined by $f^*[\mathcal F]=[f^*\mathcal \F]$ for $\F$ a locally free sheaf on $Y$.

Now let $S$ be a smooth projective surface. On $K(S)$ there is a quadratic form $(u,v)\mapsto \chi(S,u\otimes v)$ to be denoted by $\chi(u\otimes v)$. Classes
$u,v\in K(S)$ are called numerically equivalent if $u-v$ is in the radical of this quadratic form. 
Let $K(S)_{\num}$ be the set of numerical equivalence classes.
Let $c\in K(S)_{\num}$. For a flat family $\cE$ of coherent sheaves on $S$ of class $c\in K(S)_{\num}$ parametrized by a scheme $T$, let $q:S\times T\to S$, $p:S\times T\to T$ be the projections
and define
$\lambda_{\cE}:K(S)\to \Pic(S)$ by the composition
$$K(S)\xrightarrow{q^*}K^0(S\times T)\xrightarrow{\otimes [\cE]}K^0(S\times T)\xrightarrow{p_!}K^0(T)\xrightarrow{\det^{-1}}\Pic(T).$$
For $c\in K_{\num}(S)$ let $K_c:=\big\{v\in K(S)\bigm| \chi(v\otimes c)=0\big\}$. Let $H\in \Pic(S)$ be ample, denote $M^H_S(c)$ the moduli space of $H$-semistable sheaves of class $c$.
Assume that $H$ is general, i.e. if $\F\in M^H_S(c)$ is strictly $H$-semistable, then it is strictly semistable for all $H'$ in a neighbourhood of $H$. 
Then there exists a homomorphism $\lambda:K_c\to  \Pic(M^H_S(c))$, such that if $\cE$ is a flat family of coherent sheaves on $S$ of class $c$ parametrized by  $T$, 
then $\phi_{\cE}^*(\lambda(v))=\lambda_{\cE}(v)$, where $\phi_{\cE}:T\to M^H_S(c)$ is the classifying morphim associated to $\cE$.
For a class $v\in K(S)$ denote $$\vd(v):=2\rk(v)c_2(v)-(\rk(v)-1)c_1(v)^2-(\rk(v)^2-1)$$ the expected dimension of $M^H_S(v)$.
We obtain the following result.

\begin{proposition} \label{K3sheaf}
Let $S$ be a K3 surface and $c\in K_{\num}(S)$, with $r(c)>0$ or with $r(c)=0$ and $c_1(c)$ nef and big.
Let $H$ ample on $S$ such that $M^H_S(c)$ only consists of stable sheaves.
Assume furthermore $\vd(c)>1$. Then 
\begin{align*}
\Ell(M^H_S(c),\lambda(v))&=\Coeff_{p^{\frac{\vd(c)}{2}}}\left[{\big(\LL^{(2,0)}(-\phi_{-2,1},p)\big)^{\frac{\vd(v)}{2}-1}\LL(-2\phi_{0,1},p)}\right],\\
\overline \chi_{-y}(M^H_S(c),\lambda(v))&=\Coeff_{p^{\frac{\vd(c)}{2}}}\left[\frac{1}{\widetilde \Delta(p,y)}\prod_{n>0} \left(\frac{(1-p^n)^2}{(1-p^ny)(1-p^n/y)}\right)^{n^2(\frac{\vd(v)}{2}-1)}
\right].
\end{align*}
\end{proposition}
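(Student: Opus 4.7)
The plan is to reduce Proposition~\ref{K3sheaf} to the Hilbert scheme case of Proposition~\ref{K3prop} via the deformation invariance built into Corollary~\ref{BBcor}. Under the stated hypotheses, a standard package of results (Mukai, O'Grady, Yoshioka) gives that $M:=M^H_S(c)$ is a smooth projective irreducible holomorphic symplectic manifold of dimension $\vd(c)$, and moreover that $M$ is deformation equivalent to the Hilbert scheme $S^{[n]}$ with $n=\vd(c)/2$. Corollary~\ref{BBcor} then furnishes a single polynomial $h_n\in\Q[y^{\pm1}][[q]][z]$, depending only on $n$, such that simultaneously
$$\Ell(M,\lambda(v))=h_n\bigl(q_M(c_1(\lambda(v)))\bigr)\quad\text{and}\quad\Ell(S^{[n]},\mu(L)\otimes E^r)=h_n\bigl(q_{S^{[n]}}(c_1(\mu(L)\otimes E^r))\bigr).$$

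I would then identify $h_n$ by feeding in the Hilbert scheme data. Using $q_{S^{[n]}}(L+rE)=L^2-2r^2(n-1)$ from \cite{B} together with Proposition~\ref{K3prop}, and observing that the right hand side of Proposition~\ref{K3prop} is polynomial in $u:=\tfrac12(L^2-2r^2(n-1))$ which takes infinitely many values as $(S,L,r)$ ranges over K3 data, one reads off
$$h_n(u)=\Coeff_{p^n}\bigl[\LL^{(2,0)}(-\phi_{-2,1},p)^{u/2}\LL(-2\phi_{0,1},p)\bigr].$$

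The hard part will be the explicit computation of $q_M(c_1(\lambda(v)))$ in terms of $v$. For this I would invoke the Mukai isomorphism from the theory of moduli of sheaves on K3 surfaces: under the Mukai vector $v(w):=\ch(w)\sqrt{\td(S)}$ and the Mukai pairing on $H^*(S,\Z)$, the assignment $w\mapsto c_1(\lambda(w))$ restricts to an isometry from the orthogonal complement of $v(c)$ onto $(H^2(M,\Z),q_M)$. A direct expansion yields $\langle v(w),v(w)\rangle=\vd(w)-2$, whence
$$q_M(c_1(\lambda(v)))=\vd(v)-2,$$
and substituting into $h_n$ produces the stated elliptic genus formula.

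For the $\chi_{-y}$-specialisation, set $q=0$. Using $\phi_{-2,1}|_{q=0}=y-2+y^{-1}$ and $\phi_{0,1}|_{q=0}=y+10+y^{-1}$, the Borcherds products collapse to
$$\LL^{(2,0)}(-\phi_{-2,1},p)|_{q=0}=\prod_{n>0}\Big(\frac{(1-p^n)^2}{(1-p^ny)(1-p^n/y)}\Big)^{n^2},\qquad\LL(-2\phi_{0,1},p)|_{q=0}=\frac{1}{\widetilde\Delta(p,y)},$$
and plugging into the elliptic formula at $q=0$ gives the claimed $\chi_{-y}$ formula. All other steps are direct substitution; the only non-routine input is the Mukai-lattice identification $q_M(c_1(\lambda(v)))=\vd(v)-2$.
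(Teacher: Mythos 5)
Your proposal is correct and follows essentially the same route as the paper: deformation equivalence of $M^H_S(c)$ to $S^{[\vd(c)/2]}$ plus Corollary~\ref{BBcor} to reduce to a single polynomial, Proposition~\ref{K3prop} together with $q_{S^{[n]}}(L+rE)=L^2-2r^2(n-1)$ to identify that polynomial, and the Mukai-lattice isometry $\theta_w$ (with $\langle\phi(v),\phi(v)\rangle=\vd(v)-2$) to evaluate $q_M(c_1(\lambda(v)))$, exactly as in the paper's appeal to \cite{GNY}, \cite{Y1}, \cite{Y2}. The only blemish is a harmless notational slip in your identification of $h_n$ (you set $u=\tfrac12(L^2-2r^2(n-1))$ but then write the exponent as $u/2$, whereas consistency requires $u$ to be the full Beauville--Bogomolov value $q_{S^{[n]}}(L+rE)$); the final formulas are unaffected.
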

\begin{proof}
 We adapt the arguments in \cite[Section~1.5]{GNY}.
 The Mukai lattice of $S$ is $H^*(S,\Z)$ with the symmetric bilinear form 
 $$\<w,w'\>=\int_{S} (c_1c_1'-ra'-r'a), \quad w=(r,c_1,a), \ w'=(r',c_1',a').$$ 
 Let $\phi:K(S)\to H^*(S,\Z)$ be the homomorphism defined by $\phi(E)=\ch(E)\sqrt{\td(S)}.$ Let $w:=\phi(c)$, then $\phi$ induces a injective homomorphism $\phi:K_c\to w^\perp$.
 There is a homomorphism $\theta_w:w^\perp\to H^2(M_S^H(c),\Z)$, such that $\theta_w(\phi(v))=c_1(\lambda(v))$ for all $v\in K_c$.
 We have assumed  that $\rk(c)>0$ or  that $\rk(c)=0$ and $c_1(c)$ is nef and big, furthermore $\vd(c)>1$, and $M^H_S(c)$ consists only of stable sheaves. 
 Under these assumptions we know (\cite[Theorem~1.14]{GNY}, \cite{Y1}, \cite{Y2}),  that $M^H_S(c)$ is an irreducible symplectic manifold which is deformation equivalent to $S^{[\vd(c)/2]}$,
  $\theta_w:w^\perp\to  H^2(M_S^H(c),\Z)$ is surjective, and for $x$ in $w^\perp$ we have 
 $\<x,x\>=q_{M^H_S(c)}(\theta_w(x))$.
 
Let $v\in K_c$. 
By \corref{BBcor} we have
 $$\Ell(M^H_S(c),\lambda(v))=\int_{M^H_S(c)}\ELL(M^H_S(c))\exp(c_1(\lambda(v)))=h_{M^H_S(c)}\big(q_{M^H_S(c)}(\theta_w(c_1(\lambda(v))))\big).$$
 As $M^H_S(c)$ is  deformation equivalent to $S^{[\vd(c)/2]}$, we have $h_{M^H_S(c)}=h_{S^{[\vd(c)/2]}}.$
We compute 
$$q_{M^H_S(c)}\big(\theta_w(c_1(\lambda(v)))\big)=\<\phi(v),\phi(v)\>=\vd(v)-2.$$
Thus we get by \propref{K3prop} and \eqref{intq},\eqref{qE} that
$$\Ell(M^H_S(c),\lambda(v))=h_{S^{[\vd(c)/2]}}(\vd(v)-2)=\Coeff_{p^{\frac{\vd(c)}{2}}}\left[{\LL^{(2)}(-\phi_{-2,1},p)^{\frac{\vd(v)}{2}-1}\LL(-2\phi_{0,1},p)}\right].$$
 \end{proof}
  
In order to express this formula in terms of generating functions we denote the line bundles $\lambda(v)$ on different moduli spaces $M^H_S(c)$ in a unified way, generalizing  our notation on Hilbert schemes of points.
 \begin{notation}For fixed $c\in K(S)_{\num}$ we write $s=\rk(c)$, $c_1=c_1(c)$, $c_2=c_2(c)$. Let us denote by $\det(c)$ and $\det(v)\in \Pic(S)$ the determinant line bundles. We assume that $s>0$, and denote $\vd:=\vd(c)$.
 Let $r\in \Z$. Let $L\in \Pic(S)\otimes \det(c)^{r/s}$, and denote $M:=L\otimes \det(c)^{-r/s}\in \Pic(S)$.
 If $s$ divides $Mc_1+r(\frac{c_1^2}{2}-c_2)$, we define $\mu(L)\otimes E^{r}\in\Pic(M^H_S(c))\in \Pic(M^H_S(c))$ by
 $\mu(L)\otimes E^{r}:=\lambda(v)$ for $v\in K(S)$ with 
 \begin{equation}\label{vcond} \rk(v)=r, \quad c_1(v)=M, \quad c_2(v)=\frac{M^2}{2} +2r+\frac{Mc_1}{s}+\frac{r}{s}\big(\frac{c_1^2}{2}-c_2\big).\end{equation}
 Note that the condition on $c_2(v)$ is equivalent to $\chi(S,c\otimes v)=0$ i.e. to $v\in K_c$, so that $\mu(L)\otimes E^{r}$ is well-defined.
 \end{notation}
 
 \begin{remark}
 \leavevmode
 \begin{enumerate}
 \item
 When $r=0$ this definition coincides with the definition of the Donaldson line bundle $\mu(L)$ (see e.g. \cite{GNY}, \cite{GKW}). 
\item When $s=1$ the definition specializes to that of $\mu(L)\otimes E^r$ on $S^{[n]}$ under the identification $S^{[n]}=M^H_S(1,c_1,n)$, for any first Chern class $c_1$.
\item If all sheaves in $M^H_S(c)$ are slope stable, then twisting  by a line bundle $A$ gives an isomorphism $\phi_A:M^H_S(c)\to M^H_S(c\otimes A)$, and it is easy to see that   $\phi_A^*(\mu(L)\otimes E^{r})=\mu(L)\otimes E^{r}.$
\end{enumerate}
\end{remark}

Now we prove \thmref{ellK3}.
\begin{proof}[Proof of \thmref{ellK3}]Fix $c$ with $\rk(c)=s>0$, $c_1(c)=c_1$, $c_2(c)=c_2$, fix $r\in \Z$, let $L\in \Pic(S)\otimes \det(c)^{r/s}$, denote by $M:=L\otimes \det(c)^{-r/s}$ the corresponding element in $\Pic(S)$, and  assume that $s$ divides  $Mc_1+r(\frac{c_1^2}{2}-c_2)$.
Let $v\in K_c$ fullfill \eqref{vcond}, so that  $\mu(L)\otimes E^{r}=\lambda(v)$.
Plugging the relations 
$$
c_1(v)=c_1(L)-\frac{r}{s}c_1, \quad c_2(v)=\frac{c_1(v)^2}{2} +2r+\frac{c_1(v)c_1}{s}+\frac{r}{s}\big(\frac{c_1^2}{2}-c_2\big) \quad\mathrm{and}\quad c_2=\frac{1}{2s}\big(\vd(c)+(s-1)c_1^2+2(s^2-1)\big)
$$
into the formula
$$\vd(v)=2rc_2(v)-(r-1)c_1(v)^2-2(r^2-1)$$
 gives by direct computation
$$\vd(v)=L^2+2-\frac{r^2}{s^2}(\vd(c)-2).$$
Thus 
\propref{K3sheaf} gives 
\begin{align*}
\Ell(M^H_S(c),\mu(L)\otimes E^r)&=
\Coeff_{p^{\vd(c)/2}}\left[\big(\LL^{(2,0)}(-\phi_{-2,1},p)\big)^{\frac{L^2}{2}-\frac{r^2}{s^2}(\vd(c)/2-1)}\LL(-2\phi_{0,1},p)\right]
\end{align*}
Applying \lemref{Lagrange} gives \thmref{ellK3}.
\end{proof}

\ifx\undefined\bysame
\newcommand{\bysame}{\leavevmode\hbox to3em{\hrulefill}\,}
\fi

\end{document}